\theoremstyle{definition}
\newtheorem{defn}{Definition}[section]
\theoremstyle{theorem}
\newtheorem{thm}{Theorem}[section]
\newtheorem{prop}[thm]{Proposition}
\newtheorem{cor}[thm]{Corollary}
\begin{document}

\title{Stabilization of Boij-S\"{o}derberg Decompositions of Ideal Powers}
\author{Sarah Mayes-Tang}
\address{Quest University Canada, 3200 University Blvd., Squamish BC, V8B 0N8}
\email{sarah.mayes-tang@questu.ca}


\maketitle

\begin{abstract}
Given an ideal $I$ we investigate the decompositions of Betti diagrams of the graded family of ideals $\{ I^k \}_k$ formed by taking powers of $I$.  We prove conjectures of Engstr\"{o}m from \cite{Eng13} and show that there is a stabilization in the Boij-S\"{o}derberg decompositions of $I^k$ for $k>>0$ when $I$ is a homogeneous ideal with generators in a single degree. In particular, the number of terms in the decompositions with positive coefficients remains constant for $k>>0$, the pure diagrams appearing in each decomposition have  the same shape, and the coefficients of these diagrams are given by polynomials in $k$.  We also show that a similar result holds for decompositions with arbitrary coefficients arising from other chains of pure diagrams.
\end{abstract}


\section{Introduction}

Eisenbud and Schreyer's proof of the Boij-S\"{o}derberg conjectures provided a new way to study the graded Betti numbers of modules, and presented the possibility of understanding arbitrary modules in terms of  modules with simple Betti numbers. In particular, they showed in \cite{ES09} that the Betti diagram of any graded Cohen-Maculay module may be written uniquely as a positive linear combination of modules with pure resolutions; such an expression is called a  \textit{ positive Boij-S\"{o}derberg decomposition}.  In \cite{BSncm} Boij and S\"{o}derberg extended this result to non-Cohen-Macaulay  graded modules and investigated additional arbitrary decompositions by removing the positivity condition.  Over the past several years, much work has been done to extend these results and to explicitly describe the decompositions in certain cases.  Despite this interest, the meaning of the coefficients and pure diagrams occurring in the decompositions remains somewhat mysterious.  

One way to better understand these decompositions is to look for patterns in the decompositions of related modules. In this paper, we build on Engst\"{o}m's work from  \cite{Eng13} and consider Boij-S\"oderberg decompositions of the Betti diagrams of powers of an ideal. Given an ideal $I$, we expect that the generating set, Betti numbers, and thus corresponding Boij-S\"{o}derberg decompositions of its powers $I^k$ will become increasingly complicated as we increase $k$.  Indeed, this is what usually happens.  However, we restrict our attention to ideals $I$ generated by homogeneous polynomials of the same degree, there is a stabilization in the Betti numbers (\cite{LV04}, \cite{Singla07}, \cite{WH14}).  Engstr\"{o}m conjectured that, when $I$ is generated by monomials of the same degree, there is a corresponding stabilization in the Boij-S\"{o}derberg decompositions (\cite{Eng13}).  The main goal of this paper is to prove a version of this conjecture which applies to ideals generated by any homogeneous polynomials of the same degree.  

In particular, when we have an ideal $I$ with generators of the same degree, Theorem \ref{thm:mainthm} says that the number of pure diagrams appearing in the positive Boij-S\"{o}derberg decomposition of $\beta(I^k)$ will stabilize as $k$ gets large.  Further,  after this stabilization occurs, the pure diagrams in the decomposition of each $\beta(I^k)$ will have nonzero entries in the same arrangement and the coefficients will be given by polynomials in $k$.  Theorem \ref{thm:mainthm2} extends this idea to decompositions of $\beta(I^k)$ arising from arbitrary chains of pure diagrams.  See Section \ref{sec:examples} for an illustration of these claims.

With this stabilization in hand, we may ask questions about the behaviour of the decompositions of $\beta(I^k)$ as $k$ approaches infinity.  For example, do the members of certain translated families of pure diagrams dominate?  If so, are the shapes of these translated families consistent across different choices of $I$?  Preliminary work on these questions has yielded some interesting patterns.  We also view this theorem as a first step to understanding the Boij-S\"{o}derberg decompositions of graded families of ideals in general.  Can we characterize the graded families where stabilization occurs?  Is there a correspondence between the diagrams and coefficients that appear in the decompositions of related families of ideals?

Section 2 of this paper provides background on the two main tools used in the proof of Theorems \ref{thm:mainthm} and \ref{thm:mainthm2}:  Boij-S\"{o}derberg theory and the Betti diagrams of powers of ideals.  Section 3 includes statements and proofs of Theorems  \ref{thm:mainthm} and \ref{thm:mainthm2} as well as examples illustrating the stabilization.

\section{Background}
Throughout, $S$ will be a polynomial ring in $n$ variables with the standard grading over a ring of characteristic zero.  The Betti numbers of an ideal $I$ of $S$ may be displayed in a Betti table $\beta(I)$, with $\beta_{i,i+j}(I)$ appearing in the $i$th column and the $j$th row of the table.

\subsection{Boij-S\"{o}derberg Theory}
\label{sec:BSBackground}
A resolution is said to be \textit{pure of type $\textbf{d} = (d_0, \dots, d_l)$} if it has the form
$$S(-d_0)^{\beta_{0,d_0}} \leftarrow S(-d_1)^{\beta_{0,d_1}} \leftarrow \cdots \leftarrow S(-d_l)^{\beta_{0,d_l}}$$
so that the corresponding Betti table of a pure resolution has exactly one nonzero entry $\beta_{i,d_i}$ in each column; such a table is called a \textit{pure diagram}. In \cite{BS08a}, Boij and S\"{o}derberg conjectured that given a strictly increasing degree sequence $\textbf{d}$ there exists a Cohen-Macaulay module with a pure resolution of type $\textbf{d}$; Eisenbud, Fl{\o}ystad, and Weyman proved this result in  \cite{EFW}. 

To describe how  the Betti table of an arbitrary module is related to pure diagrams, we identify canonical pure diagrams with degree sequence $\textbf{d}$.  There are at least three standard ways of doing so; two of them are recorded below.

\begin{defn}
Let $\textbf{d}$ be a strictly increasing degree sequence.    ${\pi}(\textbf{d})$ is the diagram with entries
\begin{equation*} {\pi}(\textbf{d})_{i,j} := \left\{
     \begin{array}{lr}
      \prod_{k\neq i} \dfrac{1}{|d_k-d_i|}  & : j=d_i\\
       0 & : j\neq d_i
       
     \end{array}
     \right.
   \end{equation*}
Let $\alpha$ be the smallest integer such that each product $\prod_{k\neq i} \dfrac{1}{|d_k-d_i|}$ is integer for all $i$ and $k$.  Then $\widetilde{\pi}(\textbf{d}) := \alpha {\pi}(\textbf{d})$ is the smallest multiple of ${\pi}(\textbf{d})$ with integer entries.
\end{defn}

The following definition gives a partial ordering on pure diagrams. 

\begin{defn}
We say that $\pi(d_0, d_1, \dots, d_s) \leq \pi(d'_0, d'_1, \dots, d'_t)$ if $s \geq t$ and $d_i \leq d'_i$ for all $i=0,1, \dots, t$.  Note that this is simply the term-wise order if we write each sequence $(d_0, d_1, \dots, d_r)$ as $(d_0,\dots, d_r, \infty, \infty, \dots)$.  A totally ordered subset of the partially ordered set of pure diagrams is called a \textit{chain}.
\end{defn}

In \cite{ES09}, Eisenbud and Schreyer prove that the Betti table of every graded Cohen-Macaulay $S$-module can be expressed uniquely as a linear combination of a chain of pure diagrams with \textit{positive} rational coefficients.  Boij and S\"{o}derberg extended this result to all graded modules in \cite{BSncm}.  The following algorithm produces such a decomposition.  

\vspace{0.1in}

\noindent \textbf{Decomposition Algorithm} \cite{ES09} \newline
\textit{Input:} A graded $S$-module $M$ with Betti table $\beta(M)$. \newline 
\textit{Output:}  A decomposition
$$\beta(M) = \eta_0 \pi(\textbf{d}_0) + \cdots + \eta_m \pi(\textbf{d}_m)$$
such that the diagrams $\pi(\textbf{d}_0), \dots, \pi(\textbf{d}_m)$ form a chain, and each $\eta_i$ is a positive rational number.

\begin{enumerate}
\item BEGIN: Let $L$ be the empty list, and set $\beta:=\beta(M)$.
\item For $i=0, \dots, \max\{ i: \beta_{i,j} \neq 0 \text{ for some } j\}$, let $d_i = \min\{ j : \beta_{i,j} \neq 0\}$ and compute $\pi(\textbf{d})$.  
\item Compute $\eta$, the largest rational number such that $\beta':=\beta- \eta \pi(\textbf{d})$ has non-negative entries.
\item Add $(\eta, \pi(\textbf{d}))$ to the list $L$.  If $\beta'=0$ then END.  Otherwise, set $\beta = \beta'$ and go back to step 2.
\end{enumerate}

\begin{defn}
The unique decomposition produced by the algorithm above will be called the \textit{positive Boij-S\"{o}derberg decomposition} of $\beta(I)$.
\end{defn}

We emphasize that there is a unique way to write the $\beta(I)$ as a linear combination of pure diagrams with \textit{positive} coefficients.  For every $\beta(I)$, the Decomposition Algorithm yields this unique expression.  When we remove positivity requirement, we obtain additional decompositions of $\beta(I)$.  

\begin{defn}
An expression of $\beta(I)$ as a (not necessarily positive) linear combination of pure diagrams forming a maximal chain is called a \textit{decomposition} of $\beta(I)$ with respect to the maximal chain.
\end{defn}

If the least degree generator of $I$ has degree $M$ and $N = \text{reg}(I)$, the nonzero entries of $\beta(I)$ are restricted to columns 0 through $n$ and rows $M$ through $N$.  Therefore, when considering a decomposition of $\beta(I)$ as a linear combination of pure diagrams, we may restrict our attention to pure diagrams that only have nonzero entries in these positions.  Following \cite{BSncm}, we define $B_{M,N}$ to be the subspace of generated by Betti diagrams with entries in these columns, and $B^s_{M,N}$ to be the subspace of $B_{M,N}$ of diagrams satisfying the first $s$ Herzog-K\"{u}hl equations.  Proposition 1 of \cite{BSncm} states that any maximal chain of pure diagrams of codimension at least $s$ in $B_{M,N}$ form a basis of $B_{M,N}^s$.  Further, any maximal chain of diagrams in $B^s_{M,N}$ yields a unique (not necessarily positive)  decomposition of $\beta(I)$.

The following result describes formulas that may be used to find the (not necessarily positive) coefficients of the decomposition with respect to any maximal chain.  For precise formulas see \cite{BSncm}.

\begin{prop}[Prop. 4 of \cite{BSncm}]
\label{prop:coeffs}
Let $I$ be an ideal in a polynomial ring $S = K[x_1, \dots, x_n]$, where $N$ be the degree of the least generator of $I$ and $M=\text{reg}(I)$.  Suppose that $\pi_0< \pi_1 = \pi(d_0, \dots, d_m) < \pi_2$ is part of a maximal chain in $B^s_{M,N}$.  If $\pi_0$ and $\pi_2$ differ from $\pi_1$ in the same column $c$, the coefficient of $\pi_1$ in the decomposition of $\beta(I)$ with respect to this maximal chain is given by 
\begin{eqnarray}
\beta_{c, d_c} \prod_{p\neq c} |d_c-d_p|.
\end{eqnarray}
Otherwise, it is given by an expression of the form
\begin{eqnarray}
\sum_{i=0}^n \sum_{d=M}^{d_i(\pi_0)} (-1)^i \alpha \prod_{q \in S} (d_q - d) \beta_{i,d}(I)
\end{eqnarray}
for a rational constant $\alpha$ and an index set $S \subset \{0, 1, \dots, m\}$.  The choice of $\alpha$ and $S$ depends on whether the placement of the nonzero entries of $\pi_1$ differ from those of $\pi_0$ and $\pi_2$ in a column or in codimension.  $\alpha$ is either equal to $1$ or to $d_{p} - d_{q}$ for some column indices $p$ and $q$.
\end{prop}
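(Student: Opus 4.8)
The plan is to realise each coefficient of the decomposition as the value of an explicit dual functional. By Proposition 1 of \cite{BSncm} the maximal chain containing $\pi_0<\pi_1<\pi_2$ is a basis of $B^s_{M,N}$, so if we write $\beta(I)=\sum_j c_j\pi_j$ then the coefficient $c_1$ of $\pi_1$ is $\psi(\beta(I))$, where $\psi$ is the unique linear functional on $B^s_{M,N}$ vanishing on every member of the chain except $\pi_1$ and taking the value $1$ there. Thus the whole task is to produce $\psi$ and evaluate it on $\beta(I)$, and I would look for $\psi$ among the entry functionals $\beta\mapsto\beta_{c,d}$ and the alternating interpolation functionals $\beta\mapsto\sum_{i,d}(-1)^i w(d)\,\beta_{i,d}$ attached to a one-variable polynomial $w$, possibly after truncating the sum in column $i$ to rows $d\le\delta_i$.

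The same-column case produces the first formula directly. Suppose $\pi_0$ and $\pi_2$ both differ from $\pi_1=\pi(d_0,\dots,d_m)$ only in column $c$, so all three have length $m+1$, $d_c(\pi_0)<d_c<d_c(\pi_2)$, and the remaining coordinates agree. For a member $\pi_j$ of the chain with $j\le 0$ we have $\pi_j\le\pi_0$, and since $\pi_j$ is at least as long as $\pi_0$ it has a column $c$ with $d_c(\pi_j)\le d_c(\pi_0)<d_c$, so $(\pi_j)_{c,d_c}=0$; symmetrically, for $j\ge 2$ either $\pi_j$ is too short to have a column $c$ or $d_c(\pi_j)\ge d_c(\pi_2)>d_c$, so again $(\pi_j)_{c,d_c}=0$. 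Since $(\pi_1)_{c,d_c}=\prod_{p\ne c}\frac1{|d_c-d_p|}$, the functional $\psi=\bigl(\prod_{p\ne c}|d_c-d_p|\bigr)\cdot\bigl(\beta\mapsto\beta_{c,d_c}\bigr)$ is the dual functional of $\pi_1$ along the chain, and $\psi(\beta(I))=\beta_{c,d_c}(I)\prod_{p\ne c}|d_c-d_p|$.

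In every other case — the two neighbours differing from $\pi_1$ in two distinct columns, or one or both of them in codimension — no entry functional vanishes on both neighbours at once, and $\psi$ has to be an alternating interpolation functional, which \cite{BSncm} presents in the truncated form $\beta\mapsto\sum_i\sum_{d\le d_i(\pi_0)}(-1)^i\alpha\prod_{q\in S}(d_q-d)\,\beta_{i,d}$ for a suitable $S\subseteq\{0,\dots,m\}$ and constant $\alpha$. The design principle is the classical identity that, for a pure diagram $\pi(\mathbf{e})$ of length $r+1$ with nodes $e_0<\cdots<e_r$, $\sum_i(-1)^i w(e_i)\prod_{k\ne i}\frac1{|e_k-e_i|}=\sum_i w(e_i)\prod_{k\ne i}\frac1{e_k-e_i}$ is $\pm$ the $r$-th divided difference of $w$, hence $0$ when $\deg w<r$ and $\pm$ the leading coefficient of $w$ when $\deg w=r$; thus the degree and the roots of $w=\alpha\prod_{q\in S}(d_q-d)$, together with the column-by-column truncation at $d_i(\pi_0)$, are exactly the handles one uses to force $\psi$ to vanish on every member of the chain but $\pi_1$ while arranging $\psi(\pi_1)=1$. (Concretely one computes $\psi$ by row-reducing the generalised-Vandermonde change of basis between the chain and a coordinate basis of $B^s_{M,N}$; the Herzog--K\"{u}hl equations satisfied by $\beta(I)$, because $\beta(I)\in B^s_{M,N}$, are what bring the answer into the displayed truncated form, whose cutoff at $d_i(\pi_0)$ reflects the staircase structure of a maximal chain.) Evaluating this $\psi$ on $\beta(I)$ then yields the second formula, a product of differences $\prod|d_p-d_q|$ of degree-sequence entries times the surviving entries $\beta_{i,d}(I)$.

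The step I expect to be the real obstacle is pinning down $S$, $\alpha$, and the truncation uniformly in this last case. The relation of the pair $(\pi_0,\pi_2)$ to $\pi_1$ splits into several combinatorial types (column/column in distinct columns, column/codimension, codimension/column, codimension/codimension), and each type forces a different index set $S$ and a different normalising constant $\alpha$ (which turns out to be $1$ or a single difference $d_p-d_q$); in each type one must check that \emph{every} member of the maximal chain other than $\pi_1$ is annihilated, which unwinds into a family of Lagrange/finite-difference vanishing identities together with the monotonicity $d_i(\pi_j)\le d_i(\pi_{j'})$ for $j\le j'$ along the chain. No single verification is deep, but organising them uniformly — and thereby pinning down the exact form of the second formula — is delicate, which is presumably why \cite{BSncm} records it as ``an expression of the form $\dots$'' rather than as one closed expression.
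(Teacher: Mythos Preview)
The paper does not prove this proposition. It is quoted as ``Prop.~4 of \cite{BSncm}'' and the surrounding text explicitly says ``For precise formulas see \cite{BSncm}''; the statement is recorded here only as a tool for the proof of Theorem~\ref{thm:mainthm2}. So there is no proof in the paper to compare your attempt against.

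That said, your approach is the right one and is essentially the argument in \cite{BSncm}: the coefficients of a decomposition along a basis chain are the values of the dual functionals, and the whole content of the proposition is the identification of those functionals. Your same-column case is correct and complete. For the remaining cases you have the right mechanism --- divided-difference identities for the polynomial $w(d)=\alpha\prod_{q\in S}(d_q-d)$ applied to pure diagrams, together with the truncation at $d_i(\pi_0)$ --- and you are honest that the case split (column/column in distinct columns, column/codimension, codimension/codimension) is where the bookkeeping lives. If you want to turn this into a self-contained proof rather than a sketch, the missing piece is exactly what you flag: for each of those combinatorial types you must exhibit $S$ and $\alpha$ and then verify vanishing on \emph{every} member of the chain, not just on $\pi_0$ and $\pi_2$. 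That verification is routine once $S$ and $\alpha$ are written down, but it is not done here, so as it stands your proposal is a correct outline rather than a proof.
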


\subsection{Stabilization of Betti Numbers of $I^k$}
Throughout this section we consider ideals $I \subseteq S=K[x_1, \dots, x_n]$ equigenerated in degree $r$.  Let $\beta_{i,j}(I^k)$ denote $(i,j)$th graded Betti number of $I^k$.  Note that under our assumption, all generators of $I^k$ are of degree $rk$ so  all nonzero Betti numbers of $I^k$ are of the form $\beta_{i,j+kr}(I^k)$ for some positive integer $j$.  

Several recent papers have studied the behaviour of the graded Betti numbers $\beta_{i,j+kr}(I^k)$ for equigenerated ideals $I$ and large $k$.  They build off of the work of Kodiyalam in \cite{Kod93} which shows that for $k>>0$ the total Betti numbers $\beta_i(I^k) = \sum_{j} \beta_{i,j+kr}(I^k)$ are polynomials in $k$.
  
The following result states that the shape formed by the nonzero entries in the Betti tables $\beta(I^k)$ stabilize as $k$ gets large.  It appears to have been proven independently by Lavila-Vidal, Singla, and Whieldon,   

\begin{prop} [\cite{LV04}, \cite{Singla07}, \cite{WH14}]\label{prop:stabilization}
Let $I$ be an ideal of the a polynomial ring $S$ that is equigenerated in degree $r$.  Then there exists a $k_0$ such that for all $k>k_0$,
$$\beta_{i,j+kr}(I^k) \neq 0 \text{ if and only if } \beta_{i,j+k_0r}(I^{k_0}) \neq 0.$$
\end{prop}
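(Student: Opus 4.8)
The plan is to package all the powers $I^k$ into the Rees algebra of $I$, read off the graded Betti numbers as Hilbert functions of finitely many modules over the fiber ring, and then invoke eventual polynomiality of those Hilbert functions; the point where equigeneration becomes decisive is that it forces the relevant grading to be ``diagonal''. Concretely, write $I=(f_1,\dots,f_\mu)$ with $\deg f_i=r$ for all $i$, set $\mathfrak m=(x_1,\dots,x_n)$, and form $\mathcal R=S[f_1t,\dots,f_\mu t]\subseteq S[t]$, bigraded so that $S$ keeps its internal degree with $t$-degree $0$ while each $f_it$ has internal degree $r$ and $t$-degree $1$. Then $\mathcal R$ is a finitely generated, hence Noetherian, bigraded $S$-algebra whose $t$-degree $k$ component $\mathcal R_{(\ast,k)}$ is $I^k$ as a graded $S$-module. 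Its fiber ring $F:=\mathcal R/\mathfrak m\mathcal R$ has $F_{(\ell,k)}=(I^k/\mathfrak mI^k)_\ell$; since $I^k$ is generated over $S$ by the degree-$rk$ products of the $f_i$, the $S$-module $I^k/\mathfrak mI^k$ is concentrated in internal degree $rk$, so $F_{(\ell,k)}=0$ whenever $\ell\neq rk$. Forgetting the now-redundant internal grading, $F=\bigoplus_{k\ge 0}F_k$ is an ordinary \emph{standard graded} $K$-algebra with $F_0=K$ and $F_k=I^k/\mathfrak mI^k$.

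Next, I assemble the Tor groups. Let $K_\bullet$ be the Koszul complex on $x_1,\dots,x_n$ over $S$, a minimal free resolution of $K$, and put $\mathcal K_\bullet:=K_\bullet\otimes_S\mathcal R$, a bounded complex of finitely generated free bigraded $\mathcal R$-modules. Taking the $t$-degree $k$ component is exact and recovers $K_\bullet\otimes_SI^k$, so for $N_i:=H_i(\mathcal K_\bullet)$ one has $\dim_K(N_i)_{(\ell,k)}=\beta_{i,\ell}(I^k)$. Each $N_i$ is a finitely generated bigraded $\mathcal R$-module, being a subquotient of a finitely generated module over the Noetherian ring $\mathcal R$; moreover multiplication by each $x_j$ is null-homotopic on $K_\bullet$, hence on $\mathcal K_\bullet$, hence zero on homology, so $\mathfrak m\mathcal R\cdot N_i=0$ and $N_i$ is a finitely generated bigraded module over the fiber ring $F$.

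Now I slice by the shift and finish. Fix finitely many bihomogeneous $F$-module generators of $N_i$, of bidegrees $(\ell_s,k_s)$, and set $j_s:=\ell_s-rk_s$. Because $F$ is concentrated on the diagonal $\ell=rk$, the generator of bidegree $(\ell_s,k_s)$ contributes only to components $(N_i)_{(rk+j_s,\,k)}$; hence, for each integer $j$, the slice $D_{i,j}:=\bigoplus_k(N_i)_{(rk+j,\,k)}$ is a finitely generated graded module over the standard graded algebra $F$, it vanishes unless $j\in\{j_s\}$, and $\dim_K(D_{i,j})_k=\beta_{i,\,rk+j}(I^k)$. Only finitely many pairs $(i,j)$ give a nonzero slice. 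For each such pair, the Hilbert function $k\mapsto\dim_K(D_{i,j})_k$ agrees for $k\gg 0$ with a polynomial, which is either identically zero — forcing $D_{i,j}$ to have finite length and $\beta_{i,rk+j}(I^k)=0$ for $k\gg 0$ — or has positive leading coefficient, forcing $\beta_{i,rk+j}(I^k)\neq 0$ for $k\gg 0$. In either case there is a threshold $k_{i,j}$ beyond which the vanishing of $\beta_{i,rk+j}(I^k)$ is constant in $k$; taking $k_0:=1+\max_{(i,j)}k_{i,j}$ over the finitely many relevant pairs (and noting $\beta_{i,rk+j}(I^k)=0$ automatically when $j\notin\{j_s\}$) yields $\beta_{i,j+kr}(I^k)\neq 0\iff\beta_{i,j+k_0r}(I^{k_0})\neq 0$ for all $k>k_0$, which is the claim.

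The main obstacle is the structural input: one must verify that $N_i=H_i(\mathcal K_\bullet)$ is genuinely finitely generated over the fiber ring (which hinges on $\mathcal R$ being Noetherian and on $\mathfrak m\mathcal R$ annihilating $N_i$) and, above all, that $F$ is concentrated on the diagonal $\ell=rk$ — it is exactly this that keeps the support of $\beta(I^k)$ inside finitely many rows and makes each slice an honest $F$-module. Granting these, the Hilbert-function step is routine, and the argument incidentally gives the linear bound $\operatorname{reg}(I^k)\le rk+c$ for a constant $c$ independent of $k$.
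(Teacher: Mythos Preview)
Your argument is correct. Note, however, that the paper does not supply its own proof of this proposition: it is quoted as a known result from \cite{LV04}, \cite{Singla07}, and \cite{WH14}, so there is no in-paper proof to compare against.

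That said, your proof is essentially the standard one found in those references (in particular Singla's and Lavila-Vidal's): pass to the Rees algebra, observe that the bigraded Tor modules $N_i=\operatorname{Tor}^S_i(K,\mathcal R)$ are finitely generated over the fiber ring $F=\mathcal R/\mathfrak m\mathcal R$, use equigeneration to see that $F$ is standard graded in the $t$-variable, and then slice along the diagonals $\ell=rk+j$ so that each $\beta_{i,rk+j}(I^k)$ becomes the Hilbert function of a finitely generated graded $F$-module. Two points that deserve emphasis in your write-up are exactly the ones you flag at the end: (i) the annihilation $\mathfrak m\,N_i=0$, which you deduce from the null-homotopy of multiplication by $x_j$ on the Koszul complex, is what lets you descend from $\mathcal R$ to $F$; and (ii) the decomposition $N_i=\bigoplus_j D_{i,j}$ into $F$-submodules is legitimate precisely because $F$ is concentrated on the diagonal, so each $D_{i,j}$ is a direct summand and hence finitely generated. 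Once those are in place, the eventual polynomiality of Hilbert functions over a standard graded $K$-algebra does the rest, and your argument actually recovers the stronger Proposition~\ref{prop:polylstabilization} along the way.
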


\begin{cor}\label{cor:powerswindow}
Let $I$ an ideal of a polynomial ring $S$ equigenerated in degree $r$.  Then there are integers $k_0$ and $N$ such that for every $k>k_0$, any decomposition of $\beta(I^k)$ as a linear combination of pure diagrams consists of pure diagrams in the window $B_{rk, rk+N}$.
\end{cor}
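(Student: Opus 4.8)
The plan is to deduce Corollary~\ref{cor:powerswindow} directly from Proposition~\ref{prop:stabilization} by tracking how the ``window'' containing the nonzero Betti numbers grows with $k$. First I would apply Proposition~\ref{prop:stabilization}: there is a $k_0$ so that for all $k > k_0$, the set of pairs $(i,j)$ with $\beta_{i,j+kr}(I^k) \neq 0$ is exactly the set of pairs $(i,j)$ with $\beta_{i,j+k_0 r}(I^{k_0}) \neq 0$. In particular this stable set is finite and independent of $k$; let $N$ be the largest $j$ occurring in it (and note $i$ ranges over $0,\dots,n$ automatically since $S$ has $n$ variables). Then for every $k > k_0$ the nonzero entries of $\beta(I^k)$ lie in columns $0$ through $n$ and in rows $rk$ through $rk + N$, i.e.\ $\beta(I^k) \in B_{rk,\, rk+N}$ in the notation of the excerpt.

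Next I would recall from the discussion preceding Proposition~\ref{prop:coeffs} that when one writes $\beta(I^k)$ as a linear combination of pure diagrams forming a maximal chain in $B_{rk, rk+N}$, every pure diagram in that chain has its nonzero entries confined to exactly the same columns $0,\dots,n$ and rows $rk,\dots,rk+N$ — this is precisely why one restricts attention to $B_{M,N}$ in the first place. Concretely, a pure diagram $\pi(\mathbf{d})$ with $\mathbf{d} = (d_0 < d_1 < \cdots < d_\ell)$ contributes to a decomposition of a diagram supported in $B_{rk, rk+N}$ only if $\ell \le n$ and $rk \le d_0$ and $d_\ell \le rk + N$; any pure diagram violating these bounds cannot appear (its support would stick out of the window, and since all the diagrams in a maximal chain of $B_{rk,rk+N}$ satisfy the bound, no cancellation can rescue it). Hence every pure diagram in any such decomposition lies in $B_{rk, rk+N}$.

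The only genuine point requiring care — and the step I expect to be the main obstacle, though it is minor — is making sure the bound $N$ can be chosen uniformly in $k$ rather than growing. This is exactly what Proposition~\ref{prop:stabilization} delivers: the shape of the nonzero region, measured relative to the degree shift $kr$, is eventually constant, so the ``height'' $N$ of the window does not depend on $k$ once $k > k_0$. One should also confirm the indexing convention matches: the Betti table $\beta(I^k)$ places $\beta_{i,i+j}$ in column $i$, row $j$, and since all minimal generators of $I^k$ have degree $rk$, every nonzero $\beta_{i,m}(I^k)$ has $m = kr + j$ with $0 \le j \le N$, so $m$ ranges over $[kr,\, kr+N]$; this is the content of writing $\beta(I^k) \in B_{rk, rk+N}$. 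With $k_0$ and $N$ fixed as above, the corollary follows. I would close by remarking that this corollary is what lets the later arguments work with a single fixed window of pure diagrams (after reindexing by the shift $kr$), which is the structural input to Theorems~\ref{thm:mainthm} and~\ref{thm:mainthm2}.
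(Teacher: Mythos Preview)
Your proposal is correct and follows exactly the approach the paper intends: the corollary is stated without proof immediately after Proposition~\ref{prop:stabilization}, and your argument---use the stabilization of the shape of $\beta(I^k)$ to bound the rows by $rk$ through $rk+N$ for a fixed $N$, then invoke the discussion before Proposition~\ref{prop:coeffs} that decompositions of a diagram in $B_{M,N}$ use only pure diagrams in $B_{M,N}$---is precisely the implied reasoning. One small indexing slip: the condition for $\pi(\mathbf{d})$ to lie in $B_{rk,rk+N}$ is $d_\ell-\ell\le rk+N$ rather than $d_\ell\le rk+N$ (since $\beta_{i,d_i}$ sits in row $d_i-i$), but this does not affect the argument.
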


The next result extends Proposition \ref{prop:stabilization} to tell us that the nonzero Betti numbers $\beta_{i,j+kr}(I^k)$ are polynomial in $k$ for $k>>0$.  

\begin{prop} [Cor. 2.2 of \cite{Singla07}, Prop. 6.3.6 of \cite{LV04}]\label{prop:polylstabilization}
Let $I$ be an ideal in a polynomial ring $S$ that is equigenerated in degree $r$.  Then there exists a $k_0$ such that for all $k> k_0$ and all $i$ and $j$, $\beta_{i, rk+j}(I^k)$ is a polynomial function in $k$.  The  polynomial functions corresponding to nonzero entries have positive leading coefficients and are of degree less than the length of $I$.  
\end{prop}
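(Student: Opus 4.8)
The plan is to assemble the Betti numbers of all the powers $I^k$ at once inside a single finitely generated module over a polynomial ring, using the Rees algebra, and then to split off the internal grading. Let $f_1,\dots,f_\mu$ minimally generate $I$, all of degree $r$, and let $\mathcal R=\mathcal R(I)=\bigoplus_{k\ge 0}I^kt^k$ be the Rees algebra. It is a cyclic module over the bigraded polynomial ring $\mathcal P=S[y_1,\dots,y_\mu]$ via $y_l\mapsto f_lt$, where we give each $x_i$ bidegree $(1,0)$ and each $y_l$ bidegree $(r,1)$ (internal degree, then $t$-degree). Fix a bigraded free resolution $F_\bullet\to\mathcal R$ over $\mathcal P$ by finitely generated free modules; it has finite length, both facts being automatic since $\mathcal P$ is a polynomial ring and $\mathcal R$ is a finitely generated $\mathcal P$-module.

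First I would observe that, although $F_\bullet$ is not an $S$-free resolution, each $F_j$ is a free, hence flat, $S$-module, because $\mathcal P$ is $S$-free and direct sums and degree shifts preserve flatness. Thus $F_\bullet$ is an $S$-flat resolution of $\mathcal R$, so $\mathrm{Tor}^S_i(\mathcal R,K)=H_i(F_\bullet\otimes_S K)$. Since $\mathcal P\otimes_S K=K[y_1,\dots,y_\mu]=:R$ and $\mathcal R\cong\bigoplus_k I^k$ as $S$-modules, this identifies
$$M_i:=\bigoplus_{k}\mathrm{Tor}^S_i(I^k,K)\;=\;H_i(F_\bullet\otimes_S K)$$
with the homology of a complex of finitely generated free $R$-modules; hence $M_i$ is a finitely generated bigraded $R$-module on which each $y_l$ acts with bidegree $(r,1)$, and its component in bidegree $(rk+j,k)$ has $K$-dimension $\beta_{i,rk+j}(I^k)$. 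Forgetting the internal grading recovers Kodiyalam's statement that the total Betti numbers are eventually polynomial; the point is to keep track of it. Note also that $M_i$ is a module over $\mathrm{Tor}^S_0(\mathcal R,K)=\mathcal R/\mathfrak m\mathcal R$, the fibre cone of $I$.

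The key step is to slice $M_i$ along the internal grading. Because every algebra generator $y_l$ of $R$ raises the internal degree by exactly $r$ while raising the $t$-degree by exactly $1$, multiplication by $y_l$ sends the bidegree $(rk+j,k)$ component into the bidegree $(r(k+1)+j,k+1)$ component; hence, for each fixed $j$, $N^{(j)}:=\bigoplus_k (M_i)_{(rk+j,\,k)}$ is a graded $R$-submodule of $M_i$ and $M_i=\bigoplus_j N^{(j)}$. Each $N^{(j)}$ contains the finitely many bihomogeneous generators of $M_i$ lying in it, so is itself a finitely generated graded $R$-module once regraded by $k$ (a grading in which each $y_l$ has degree $1$), with Hilbert function $k\mapsto\beta_{i,rk+j}(I^k)$. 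By Hilbert's theorem this function agrees with a polynomial $P_{i,j}(k)$ for $k\gg0$, and by Proposition~\ref{prop:stabilization} only finitely many pairs $(i,j)$ can have $\beta_{i,rk+j}(I^k)\ne 0$ for large $k$ (with $i$ confined to finitely many columns and $j$ to a bounded set of non-negative integers), so a single $k_0$ serves for all of them. For the last assertions: the Hilbert polynomial of a nonzero finitely generated graded module over a polynomial ring has positive leading coefficient, and by Proposition~\ref{prop:stabilization} a single nonzero value $\beta_{i,rk+j}(I^k)$ with $k$ large forces $N^{(j)}$ to be nonzero in all high degrees, hence $P_{i,j}\not\equiv 0$; and $\deg P_{i,j}\le\dim N^{(j)}-1\le\dim(\mathcal R/\mathfrak m\mathcal R)-1=\ell(I)-1$, where $\ell(I)$ is the analytic spread of $I$ (the quantity called the length of $I$ in the statement), see \cite{LV04,Singla07}.

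I expect the one genuinely load-bearing point to be the rigidity of the bigrading: one must check carefully that $M_i$ is finitely generated over $R$ \emph{with each $y_l$ in bidegree $(r,1)$}, since it is precisely this that makes the slices $N^{(j)}$ honest graded $R$-submodules and lets Hilbert's theorem apply to each of them separately. The equigenerated hypothesis enters here in an essential way: it forces the ratio of internal degree to $t$-degree of every generator of $\mathcal R$ to equal the single constant $r$; without it the $N^{(j)}$ would fail to be $R$-stable and one could conclude only the polynomiality of the total Betti numbers, not of the individual graded Betti numbers. Everything downstream --- polynomiality, the degree bound, and positivity of the leading term --- is then a routine application of the theory of Hilbert functions.
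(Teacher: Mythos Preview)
Your argument is correct and is essentially the one given in the cited references \cite{Singla07} and \cite{LV04}; note, however, that the present paper does \emph{not} supply its own proof of this proposition---it is quoted as a known result and used as a black box in the proofs of Theorems~\ref{thm:mainthm} and~\ref{thm:mainthm2}. So there is no ``paper's proof'' to compare against here.

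For the record, the Rees-algebra approach you outline is exactly the standard one: present $\mathcal R(I)$ as a bigraded quotient of $\mathcal P=S[y_1,\dots,y_\mu]$, resolve over $\mathcal P$, tensor with $K$ over $S$, and read off each $\bigoplus_k\mathrm{Tor}^S_i(I^k,K)$ as a finitely generated bigraded module over $R=K[y_1,\dots,y_\mu]$. Your observation that the equigeneration hypothesis forces every $y_l$ to have bidegree $(r,1)$, so that the diagonal slices $N^{(j)}=\bigoplus_k (M_i)_{(rk+j,k)}$ are genuine graded $R$-submodules, is precisely the mechanism isolated in \cite{Singla07}; Hilbert's theorem then gives the eventual polynomiality, and the annihilation of $M_i$ by $\ker(R\to\mathcal R/\mathfrak m\mathcal R)$ gives the bound $\deg P_{i,j}\le \ell(I)-1$. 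Your identification of ``the length of $I$'' in the statement with the analytic spread $\ell(I)$ is the intended reading. The only place one might want an extra sentence is in checking that the $R$-action on $H_i(F_\bullet\otimes_S K)$ really does factor through the fibre cone: this follows because multiplication by $p\in\mathcal P$ on $F_\bullet$ is a chain map lifting multiplication by its image in $\mathcal R$, while $\mathfrak m$ kills $\mathrm{Tor}^S_i(-,K)$.
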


We will use the following terminology to describe decompositions of $\beta(I^k)$.  

\begin{defn}
We say that a \textit{translated family of pure diagrams} (indexed by $k$) is a sequence of pure diagrams whose nonzero entries have a fixed shape beginning in a row given  by a linear function $l(k)$.\footnote{Engstr\"{o}m calls these families  \textit{translations of pure diagrams.}}  Such a family may be of one of two forms,  depending on whether we consider resolutions of ideals $J$ or the corresponding quotient rings $S/J$:  

\begin{multicols}{2}

\begin{tabular}{r| l l l l l }
& 0 & 1 & 2 & $\cdots$\\
\hline
0 &  & \\
$\vdots$ & \\
$l(k)$ & \multicolumn{4}{c}{A fixed shape for}\\ 
&  \multicolumn{4}{c}{nonzero entries}
\end{tabular}

\begin{tabular}{r| l l l l l }
& 0 & 1 & 2 & $\cdots$\\
\hline
0 & 1 & \\
$\vdots$ & \\
$l(k)$ & &\multicolumn{4}{c}{A fixed shape for}\\ 
& & \multicolumn{4}{c}{nonzero entries}
\end{tabular}
\end{multicols}

A \textit{translated family of chains} indexed by $k$ is a collection of chains $C(k)= \{\pi_0(k)<\pi_1(k)< \cdots < \pi_l(k)\}$ where each $\pi_i(k)$ is a translated family of pure diagrams.
\end{defn}

\section{Stabilization of Decompositions}

Throughout this section $I \subseteq S=K[x_1, \dots, x_n]$ is an ideal equigenerated in degree $r$.  In Theorem \ref{thm:mainthm} we describe the positive Boij-S\"{o}derberg decompositions of $\beta(I^k)$ and in Theorem \ref{thm:mainthm2} we address arbitrary decompositions of $\beta(I^k)$.

\begin{thm}\label{thm:mainthm}
Let $I$ be a homogeneous ideal in a polynomial ring $S$ with all generators of degree $r$.  Then there are integers $m$ and $K$ such that for all $k> K$, the positive Boij-S\"{o}derberg decomposition of $\beta(I^k)$ is of the form
$$\beta(I^k) = w_1(k)\pi_1(k)+\cdots + w_m(k)\pi_m(k)$$
where each $w_i(k)$ is a polynomial in $k$ with rational coefficients and $\pi_1(k) < \pi_2(k) < \cdots < \pi_m(k)$ is a translated family of chains.  
\end{thm}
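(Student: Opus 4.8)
The plan is to run the Decomposition Algorithm on $\beta(I^k)$ for $k$ large and track, step by step, exactly which pure diagram is subtracted and with what coefficient, showing that everything depends polynomially on $k$ once $k$ is large enough. First I would invoke Proposition \ref{prop:stabilization} and Corollary \ref{cor:powerswindow} to fix $k_0$ and $N$ so that for $k>k_0$ all the Betti tables $\beta(I^k)$ have their nonzero entries in a common shape inside the window $B_{rk,rk+N}$; after the harmless normalization $j \mapsto j - rk$ (shifting everything down by $rk$), the tables $\beta(I^k)$ all live in the fixed window $B_{0,N}$, with nonzero entries in one fixed set of positions $(i,j)$, and by Proposition \ref{prop:polylstabilization} each entry $\beta_{i,rk+j}(I^k)$ is, for $k>k_0$, a fixed polynomial $p_{i,j}(k)$ in $k$ with positive leading coefficient in the nonzero positions.

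Next I would analyze one pass of the algorithm. In step 2, the degree sequence $\mathbf d = (d_0,\dots,d_n)$ is determined by $d_i = \min\{j : \beta_{i,j} \neq 0\}$; because the \emph{support} of the table is constant in $k$ (after normalization), this sequence is literally constant in $k$, so the same pure diagram $\pi(\mathbf d)$ (up to the fixed $rk$-shift) is produced at this step for every large $k$. In step 3 the coefficient $\eta$ is $\min_{(i,d_i)} \beta_{i,d_i}/\pi(\mathbf d)_{i,d_i}$, a minimum of finitely many ratios of the polynomials $p_{i,d_i}(k)$ by fixed positive rationals. Here is the key point: since each $p_{i,d_i}(k)$ has positive leading coefficient and fixed degree, two such ratios can only cross finitely often, so for $k$ larger than some threshold the minimum is achieved by a \emph{single} fixed index $i$, and thus $\eta = \eta(k)$ is, for large $k$, a fixed polynomial (actually a polynomial divided by a constant, hence a polynomial with rational coefficients) in $k$. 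Then $\beta' = \beta - \eta(k)\pi(\mathbf d)$ again has polynomial entries, its support stabilizes (the entry in the winning position $(i,d_i)$ becomes identically zero, the others stay nonzero for large $k$ by the leading-coefficient comparison, unless a lower-degree polynomial cancellation forces a new zero — but that too is a stable phenomenon for large $k$), and the algorithm's termination guarantees only finitely many steps occur. So I would push this through by induction on the step number: at each step the support is eventually constant in $k$, the chosen degree sequence is eventually constant, and the extracted coefficient is eventually polynomial; the number of steps $m$ is therefore eventually constant, and setting $K$ to be the maximum of the finitely many thresholds encountered gives the theorem, with the $\pi_i(k)$ forming a translated family of chains (they form a chain at each $k$ because the algorithm always outputs a chain, and their shapes are fixed with the first nonzero row given by the linear function $r k + (\text{const})$).

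The main obstacle I anticipate is controlling the combinatorics of \emph{which} index wins the minimum in step 3, and more subtly, ensuring that new zeros introduced into $\beta'$ don't make the support behave erratically as a function of $k$. The support of $\beta'$ can change not only at the position where the minimum is achieved (which becomes $0$ by construction) but potentially elsewhere if $\eta(k)\pi(\mathbf d)_{i,d_i}$ happens to equal $\beta_{i,d_i}(k)$ identically. A priori one worries the set of such ``accidental'' positions could depend on $k$ in a complicated way, but this cannot persist: the difference $\beta_{i,d_i}(k) - \eta(k)\pi(\mathbf d)_{i,d_i}$ is, for $k > K$, a fixed polynomial in $k$, so it is either identically zero or nonzero for all large $k$ — the dichotomy is stable. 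Formalizing this requires a clean lemma: a finite collection of polynomials in $k$ with real coefficients has the property that, for $k$ sufficiently large, the sign pattern (and in particular the zero pattern and the argmin of any finite subfamily, after tie-breaking by lexicographic index) is constant. I would state and prove this elementary lemma first, then feed it into the inductive run of the algorithm. With that lemma in place, the rest is the bookkeeping sketched above, and the claimed form of the decomposition follows.
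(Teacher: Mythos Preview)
Your proposal is correct and follows essentially the same route as the paper's primary proof: run the Decomposition Algorithm on $\beta(I^k)$ after invoking the stabilization results, observe that the degree sequence at each step is constant up to the $rk$-shift, argue that the coefficient $\eta(k)$ is eventually a fixed polynomial because the finitely many candidate polynomials cross only finitely often, and iterate. Your treatment of the ``accidental zeros'' issue and the proposed lemma on eventual sign patterns of polynomial families are in fact somewhat more explicit than the paper's corresponding step, which simply asserts that the residual table $\beta'(k)$ has the same shape for all large $k$.
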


\begin{thm}\label{thm:mainthm2}
Let $I$ be a homogeneous ideal in a polynomial ring $S$ with all generators of degree $r$.  Suppose that $N$ is as in Corollary \ref{cor:powerswindow}.  Then there exists a $K$ such that for all $k>K$ and all translated families of maximal chains  $\pi(\textbf{d}^0(k))< \cdots < \pi(\textbf{d}^l(k))$ in $B_{rk,rk+N}$, the Boij-S\"{o}derberg decomposition of $\beta(I^k)$ with respect to the appropriate chain is of the form 
$$\beta(I^k) = w_0(k) \pi(\textbf{d}^0(k)) + \cdots + w_l(k) \pi(\textbf{d}^l(k))$$
where each $w_i(k)$ is a polynomial in $k$ with rational coefficients.

\end{thm}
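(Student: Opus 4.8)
The plan is to combine the coefficient formulas from Proposition \ref{prop:coeffs} with the polynomiality of the Betti numbers $\beta_{i,rk+j}(I^k)$ from Proposition \ref{prop:polylstabilization}. First I would fix $k_0$ large enough that, for all $k > k_0$, both Proposition \ref{prop:stabilization} and Proposition \ref{prop:polylstabilization} apply simultaneously: the support of $\beta(I^k)$ (after shifting rows down by $rk$) is a fixed shape inside the window $B_{rk, rk+N}$ given by Corollary \ref{cor:powerswindow}, and every entry $\beta_{i, rk+j}(I^k)$ agrees with a fixed polynomial $p_{i,j}(k)$. Because the window $B_{rk, rk+N}$ is just a vertical translation (by $rk$) of the fixed window $B_{0,N}$, a translated family of maximal chains $\pi(\textbf{d}^0(k)) < \cdots < \pi(\textbf{d}^l(k))$ in $B_{rk, rk+N}$ corresponds, after the shift, to a \emph{single} maximal chain $\widehat{\pi}^0 < \cdots < \widehat{\pi}^l$ in $B_{0,N}$ that does not depend on $k$ — only the absolute row indices move, via the linear functions $l_i(k) = rk + (\text{const})$. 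In particular the combinatorial data entering the formulas of Proposition \ref{prop:coeffs} — the column $c$, the index set $S \subset \{0,\dots,m\}$, the constant $\alpha$, and the ``differ in a column vs.\ in codimension'' dichotomy — are all determined by this fixed shifted chain and are therefore constant in $k$ for $k > k_0$.

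Next I would substitute into the two formulas of Proposition \ref{prop:coeffs}. In the first case the coefficient of $\pi(\textbf{d}^i(k))$ is $\beta_{c, d_c}(I^k) \prod_{p \neq c} |d_c - d_p|$; writing $d_j = rk + e_j$ with the $e_j$ fixed, the difference $d_c - d_p = e_c - e_p$ is independent of $k$, so the product is a fixed integer, and $\beta_{c, d_c}(I^k) = p_{c, e_c}(k)$ is a polynomial in $k$ by Proposition \ref{prop:polylstabilization}; hence the coefficient is a polynomial in $k$. In the second case the coefficient is
\begin{equation*}
\sum_{i=0}^{n} \sum_{d=rk}^{d_i(\pi^0(k))} (-1)^i \alpha \prod_{q \in S} (d_q - d)\, \beta_{i,d}(I^k).
\end{equation*}
Here I reindex the inner sum by $d = rk + t$ with $t$ ranging over a fixed finite interval $0 \le t \le e_i^0$ (the upper limit being a fixed constant since $d_i(\pi^0(k))$ translates linearly); then $d_q - d = e_q - t$ is independent of $k$, the factor $\alpha$ is a fixed rational (either $1$ or a fixed difference $e_p - e_q$), and $\beta_{i, rk+t}(I^k) = p_{i,t}(k)$ is a polynomial in $k$. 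A finite sum of products of polynomials in $k$ with constants is a polynomial in $k$, which gives the claim for each $w_i(k)$.

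The only coefficient not directly covered by Proposition \ref{prop:coeffs} as stated is the very last one (the top of the chain, where $\pi_2$ does not exist), but this is pinned down by the fact that the decomposition must reproduce $\beta(I^k)$: solving the triangular linear system determined by the maximal chain (Proposition 1 of \cite{BSncm} gives that the chain is a basis) back-substitutes polynomial coefficients into a polynomial target and yields a polynomial, so I would handle it by a short induction down the chain, or simply by noting that the last coefficient equals a specific entry of $\beta(I^k)$ divided by a fixed normalizing constant of the top diagram. The main obstacle I anticipate is bookkeeping rather than conceptual: I must verify carefully that every summation range and every combinatorial index in Proposition \ref{prop:coeffs} translates \emph{rigidly} with $k$ (i.e.\ differs from its $k_0$-value by the fixed shift $r(k-k_0)$) and never changes shape, so that after the substitution $d \mapsto rk + t$ nothing depends on $k$ except the Betti numbers themselves — and for those we invoke Proposition \ref{prop:polylstabilization}. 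Once the uniform choice of $K = \max(k_0, \dots)$ is made so that all finitely many chains under consideration have stabilized support and polynomial entries, the result follows.
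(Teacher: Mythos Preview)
Your proposal is correct and follows essentially the same approach as the paper: fix $k_0$ from Propositions \ref{prop:stabilization} and \ref{prop:polylstabilization}, observe that the combinatorial data in Proposition \ref{prop:coeffs} (the column $c$, the index set $S$, the constant $\alpha$, and which case applies) is invariant under the shift $d \mapsto d + rk$, then substitute $d = rk + t$ so that all differences $d_q - d$ become constants and the only $k$-dependence is through the polynomials $P_{i,t}(k)$. Your extra remark about the endpoint coefficient of the chain is a detail the paper glosses over, but your handling of it is fine.
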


\subsection{Example}
\label{sec:examples}

In this section we present an example illustrating the results of Theorems \ref{thm:mainthm} and \ref{thm:mainthm2}.  Note that this is a simpler version of the example in \cite{Eng13}.

Consider the ideal $I=(x_1x_2,x_2x_3,x_3x_4,x_4x_5) \subseteq K[x_1,\dots, x_5]$.  For $k \geq 3$ the ideals $I^k$ have Betti tables of the following shape.  

\begin{center}

\begin{tabular}{r| l l l l l }
& 0 & 1 & 2 & 3\\
\hline
$2k$ & $\ast$ & $\ast$ &$\ast$ &$\ast$ \\
$2k+1$ & - &$\ast$ &$\ast$ & -\\
\end{tabular}
\end{center}

Engstr\"{o}m and Nor\'{e}n (\cite{EN12}) showed that, for $k \geq 3$, 

\begin{eqnarray*}
\beta_{0,2k}(I^k) = \dfrac{1}{6} k^3+k^2+\dfrac{11}{6}k + 1 &&
\beta_{1,2k+1}(I^k) = \dfrac{1}{2} k^3+\dfrac{3}{2}k^2 + k\\
\beta_{1,2k+2}(I^k) = k &&
\beta_{2,2k+2}(I^k) = \dfrac{1}{2} k^3-\dfrac{1}{2}k \\
\beta_{2,2k+3}(I^k) = k && 
\beta_{3,2k+3}(I^k) = \dfrac{1}{6} k^3-\dfrac{1}{2}k^2+\dfrac{1}{3}k 
\end{eqnarray*}

We first follow the approach of Theorem \ref{thm:mainthm}.  The positive Boij-S\"{o}derberg decomposition resulting from the Decomposition Algorithm has 5 summands.  They involve pure diagrams from the following translated families of pure diagrams.

\begin{multicols}{5}
\begin{tabular}{r| l l l l  }
& 0 & 1 & 2 & 3\\
\hline
$2k$ & $\ast$ & $\ast$ &$\ast$ &$\ast$ \\
\end{tabular}

\begin{tabular}{r| l l l   }
& 0 & 1 & 2 \\
\hline
$2k$ & $\ast$ & $\ast$ &$\ast$  \\
\end{tabular}

\begin{tabular}{r| l l l   }
& 0 & 1 & 2 \\
\hline
$2k$ & $\ast$ & $\ast$ &-  \\
$2k+1$ & - &-&$\ast$\\
\end{tabular}
 
 \quad
 
\begin{tabular}{r| l l    }
& 0 & 1  \\
\hline
$2k$ & $\ast$ & - \\
$2k+1$ & - &$\ast$\\
\end{tabular}

\begin{tabular}{r| l     }
& 0   \\
\hline
$2k$ & $\ast$  \\
$2k+1$ & - \\
\end{tabular}

\end{multicols}

In particular, for $k\geq 3$
\begin{eqnarray*}
\beta(I^k) = &&(k^3-3k^2+2k) \enspace \pi(2k,2k+1,2k+2,2k+3) + (3k^2-3k) \enspace \pi(2k,2k+1,2k+2) \\
		&+& (6k) \enspace \pi(2k,2k+1,2k+3) + (2k) \enspace \pi(2k,2k+2) + \pi(2k).
\end{eqnarray*}

It is interesting to note that the stabilization does not occur immediately:  the positive Boij-S\"{o}derberg decompositions of $\beta(I)$ and $\beta(I^2)$ contain three and four summands, respectively.

Under the approach of Theorem \ref{thm:mainthm2}, the positive Boij-S\"{o}derberg decompositions of the ideals $I^k$  for $k \geq 3$ arise from the maximal chains
\begin{eqnarray*}  && \pi(2k,2k+1, 2k+2, 2k+3) \leq \pi(2k,2k+1,2k+2,2k+4) \leq \pi(2k,2k+1, 2k+2)  \\ && \leq \pi(2k,2k+1,2k+3) \leq \pi(2k,2k+2,2k+3) \leq \pi(2k,2k+2) \leq \pi(2k) \leq \pi(2k+1)
\end{eqnarray*}
in $B_{2k,2k+1}$.

We may also consider the decomposition of $\beta(I^k)$ with respect to the maximal chain 
\begin{eqnarray*}
&& \pi(2k,2k+1,2k+2,2k+3) \leq\pi(2k,2k+1,2k+2,2k+4) \leq \pi(2k,2k+1,2k+2)  \\ && \leq \pi(2k,2k+1,2k+3)  \leq \pi(2k,2k+2,2k+3) \leq \pi(2k+1,2k+2,2k+3)  \\ && \leq \pi(2k+1,2k+2) \leq \pi(2k+1)
\end{eqnarray*}
in $B_{2k,2k+1}$.  
For $k\geq 3$, the decomposition of $\beta(I^k)$ with respect to this chain is
\begin{eqnarray*}
\beta(I^k) &=& (k^3-3k^2+2k)\enspace \pi(2k,2k+1,2k+2,2k+3) + (3k^2-3k) \enspace \pi(2k,2k+1,2k+2)  \\ && + (6k)  \enspace \pi(2k,2k+1,2k+3) + (6k+6) \enspace \pi(2k,2k+2,2k+3) \\ && + (-4k-4)\enspace \pi(2k+1,2k+2,2k+3) +  (2k+1) \enspace \pi(2k+1,2k+2) + \pi(2k+1)
\end{eqnarray*}

Notice that all coefficients are given by polynomials in $k$ and that the coefficient of $\pi(2k+1,2k+2,2k+3)$ is negative.  Similar results hold for decompositions of $\beta(I^k)$ with respect to any other maximal chain in $B_{2k,2k+1}$.

\subsection{Stabilization of Positive Boij-S\"{o}derberg Decompositions}

\begin{proof}[Proof of Theorem \ref{thm:mainthm}]
Let $\beta(I^k)$ denote the Betti diagram of $I^k$.  Let $k_0$ be such that for all $i, j$ and all $k>k_0$ either $\beta_{i,j+rk}(I^k)=0$  or there are polynomials $P_{i,j}(k)$ that satisfy
$$\beta_{i,j+rk}(I^k) = P_{i,j}(k);$$
 such a $k_0$ is guaranteed by Propositions \ref{prop:stabilization} and \ref{prop:polylstabilization}.

We may use the Decomposition Algorithm given in Section \ref{sec:BSBackground} to determine the Boij-S\"{o}derberg decomposition of $\beta(I^k)$ for $k>>0$.  In particular, we will see that for $k>>0$ the procedure returns a list $L$ of pairs $(w_i(k), \pi_i(k))$ such that each $w_i(k)$ is a polynomial in $k$ with rational coefficients, the collection $\{ \pi_i(k) \}_k$ is a translated family of pure diagrams for all $i$, and $\beta(I^k) = \sum w_i(k) \pi_i(k)$.  

\begin{enumerate}
\item Let $L$ be the empty list.  Set $K=k_0$ and $\beta(k) = \beta(I^k)$ for all $k$.  Let $P_{i,j}(k) = \beta(k)_{i,j+rk}$ be the polynomials that determine the nonzero entries of the Betti table of $I^k$ for $k > K$.

\item   Let $\textbf{d}(k) = (d_0(k), d_1(k), \dots, d_{l}(k))$ be the degree sequence that corresponds to the uppermost nonzero entries in each column of $\beta(k)$; that is, set 
$$d_i(k) = \min_i \{ j : \beta(k)_{i, j} \neq 0 \}$$
for $i=0, 1, \dots, l:=\max\{ i : \beta(k)_{i,j} \neq 0 \text{ for some } j \}$.
The diagrams  $\beta(k)$ have the same shape for all $k> K$, so for all $i$ there is an integer $d_i$ such that 
$$d_i(k) = d_i +rk$$
for $k > K$.  Thus, the differences used to compute the entries in $\pi(\textbf{d}(k))$ are the same for all $k$: $|(d_i(k)-d_{i'}(k)| = |(d_i +rk)-(d_{i'}+rk)| = |d_i-d_{i'}|$.  Therefore,

$$\pi_{i,j}(\textbf{d}(k)) =  \left\{
     \begin{array}{lr}
         \prod_{i\neq i'} \dfrac{1}{|d_i-d_{i'}|}& :j=d_i+rk\\
       0 & : j\neq d_i+rk
       
     \end{array}
     \right.
   $$
for all $i=0, \dots, l_s$ and all $k>K$. 
Notice that  $\pi(\textbf{d}(k))$ is a translated family of pure diagrams.

\item  For each $k$, we need to find the greatest rational number $\eta(k)$ such that $\beta(k) - \eta(k)\pi(\textbf{d}(k))$ has nonzero entries.   By the calculations in step 2, this is equivalent to finding the greatest $\eta(k)$ such that
$$P_{i, d_i}(k) - \eta(k)  \prod_{i\neq i'} \dfrac{1}{|d_i-d_{i'}|} \geq 0$$
for all $i=0, \dots, l$.

Set $$Q_{i}(k) = P_{i,d_i}(k) \prod_{i\neq i'}|d_i-d_{i'}| ;$$ 
each $Q_i(k)$ is a polynomial function of $k$.  Then for each $i$ we need to find the greatest $\eta(k)$ such that 
$\eta(k) \leq Q_{i}(k)$
holds for all $i$.  This means that 
$\eta(k) = \min_i \{ Q_i(k) \}.$
Notice that the same choice of $Q_I(k)$ will work for all $k >>0$.  Specifically, we can set $\kappa>K$ to be a number larger than the greatest $k$-coordinate that occurs as a point of intersection of the polynomials $Q_i(k)$.  Then for $k >\kappa$, 
$$\eta(k) := Q_I(k)$$
and $\eta(k)$ is an polynomial in $k$.

\item Add $(\eta(k), \pi(\textbf{d}(k)))$ to the list $L$ and set $K=\kappa$.  Set $\beta'(k):=\beta(k) - \eta(k) \pi(\textbf{d}(k))$.  If $\beta'(k)=0$ for all $k >K$ then  END.  

Otherwise, note that

$$\beta'_{i,j}(k) =  \left\{
     \begin{array}{lr}
        P_{i, j}(k) - \eta(k)   \prod_{i\neq i'} \dfrac{1}{|d_i-d_{i'}|}& :j=d_i+rk\\
       \beta_{i,j}(I^k) & : j\neq d_i+rk       
     \end{array}
     \right.
   $$
  Therefore,  for $k>K$ each entry in $\beta'(k)$ is either a polynomial in $k$ for or zero.  Notice that given our  choice of $\eta(k)$, $\beta'(k)$ has at least one more zero entry than $\beta(k)$, and that all $\beta'(k)$ have the same shape when $k >K$.  This means that all assumptions used in steps 2 through 4 hold for $\beta'(k)$.  Set $\beta(k):=\beta'(k)$ and let $P_{i,j} = \beta_{i,j+rk}(k)$.  REPEAT from step 2.
  
\end{enumerate}
\end{proof}

\subsection{Stabilization of General Decompositions}

In this section, we give a proof of Theorem \ref{thm:mainthm2} and show how Theorem \ref{thm:mainthm} follows as a corollary.  We chose to include both proofs of Theorem \ref{thm:mainthm} in this paper, as  the above proof is more concrete and insightful, but Theorem \ref{thm:mainthm2} is a more general result.

\begin{proof}[Proof of Theorem \ref{thm:mainthm2}]

Let $k_0$ be such that for all $k >k_0$ either $\beta_{i, rk+j} =0$ or $\beta_{i, rk+j}(I^k) = P_{i,j}(k)$ is a polynomial function in $k$ for all $i,j$.  Note that for $k >k_0$ the pure diagrams occurring in any decomposition of $\beta(I^k)$ will be a subset of $B_{rk,rk+N}$.  

Any maximal chain in $B_{rk,rk+N}$ is a member of a translated family of maximal chains of the form $\pi(\textbf{d}^0(k)) < \pi(\textbf{d}^1(k)) < \cdots < \pi(\textbf{d}^l(k))$ where
$$\textbf{d}^i(k) = (d_0^i+rk, \dots, d_p^i+rk).$$
Choose one such family $\pi(\textbf{d}^0(k))< \cdots < \pi(\textbf{d}^l(k))$ indexed by $k$ and consider the expansion of  $\beta(I^k)$ with respect to the $k$th chain in the family:
$$\beta(I^k) = w_0(k) \pi(\textbf{d}^0(k)) + \cdots + w_l(k) \pi(\textbf{d}^l(k)).$$

The coefficients $w_j(k)$ are given by Proposition \ref{prop:coeffs}.  Notice that the pure diagrams in each sub-chain $\pi(\textbf{d}^{j-1}(k)) < \pi(\textbf{d}^{j}(k)) < \pi(\textbf{d}^{j+1}(k))$ have the same shape for all $k>>0$.  This means that each pair $(\pi(\textbf{d}^{j-1}(k)),  \pi(\textbf{d}^{j}(k)))$ and $(\pi(\textbf{d}^{j}(k)),  \pi(\textbf{d}^{j+1}(k)))$ will differ in either codimension or in the same column for all large values of $k$.  Therefore, the same formula will be used to find $w_j(k)$ for all $k>>0$.

If the pairs both differ in the same column $c$ then for $k>>0$, 
\begin{eqnarray*}
w_j(k)&=& \beta_{c,\textbf{d}^j(k)_c} (I^k) \prod_{p\neq c} |\textbf{d}^j(k)_c - \textbf{d}^j(k)_p| \\
&=& \beta_{c, d^j_c+rk} (I^k) \prod_{p\neq c} |(d_c^j+rk) - (d_p^j+rk)| \\
&=& P_{i,d_c^j}(k) \prod_{p\neq c} |d_c^j - d_p^j| \\
\end{eqnarray*}

Otherwise, there is a rational constant $\alpha$ and an index set $S$ such that for all $k>>0$, 
\begin{eqnarray*}
w_j(k) &=& \sum_{i=0}^n \sum_{d=rk}^{\pi(\textbf{d}^{j-1}(k))_i} (-1)^i \alpha \prod_{q \in S} (\textbf{d}^j(k)_q - d) \beta_{i,d}(I^k)\\
&=& \sum_{i=0}^n \sum_{d=rk}^{d_i^{j-1}+rk} (-1)^i \alpha \prod_{q \in S} (d_q^j + rk - d) \beta_{i,d}(I^k)\\
&=& \sum_{i=0}^n \sum_{d'=0}^{d_i^{j-1}} (-1)^i \alpha \prod_{q \in S} (d_q^j+rk - (d'+rk)) \beta_{i,d'+rk} (I^k)\\
&=& \sum_{i=0}^n \sum_{d'=0}^{d_i^{j-1}} (-1)^i \alpha \prod_{q \in S} (d_q^j-d') P_{i,d'}(k)\\
\end{eqnarray*}

In either case, $w_j(k)$ is a polynomial in $k$ with rational coefficients as claimed.  
\end{proof}

With this proof in hand, Theorem \ref{thm:mainthm} follows as a corollary.
\begin{proof}[Proof of Theorem \ref{thm:mainthm}, version 2]
Proposition 5 of \cite{BSncm} guarantees that exactly one chain in $B_{rk,rk+N}$ will yield entirely positive coefficients in the corresponding decomposition of $\beta(I^k)$ for each $k$.  We claim that for $k>>0$, all of these chains will belong to the same translated family of chains.

Indeed, the coefficients $w_j(k)$ for a given translated family of chains are polynomials by Theorem \ref{thm:mainthm2}.  Therefore, there exists a $k_1 \geq k_0$  such that for all $k \geq k_1$, each $w_j(k)$ is either positive, negative, or zero.  Combining this fact with the statement in the previous paragraph, exactly one translated family of chains yields coefficients that are all positive for all  large $k$. Therefore, the positive Boij-S\"{o}derberg decomposition of  $\beta(I^k)$ for each $k> k_1$ arises from the same translated family of chains.
\end{proof}

\section*{Acknowledgements}

I would like to thank Christine Berkesch Zamaere for helpful feedback on the results in this paper, and Daniel Erman for introducing Boij-S\"{o}derberg theory to me. Calculations in this paper were performed using the computer software Macaulay2 \cite{Macaulay2}.

\bibliography{BSBib}
\bibliographystyle{amsalpha}
\nocite{*}

\end{document}